\documentclass[12pt, a4paper]{amsart}
\usepackage{amssymb, mathtools}
\usepackage{mathrsfs}
\usepackage{hyperref}

\newcommand{\Hom}{\operatorname{Hom}}
\newcommand{\D}{\mathscr{D}}
\newcommand{\DA}{\D_A}
\newcommand{\DB}{\D_B}
\newcommand{\dd}{\mathrm{d}}
\renewcommand{\Im}{\operatorname{Im}}

\theoremstyle{plain}
\newtheorem{theorem}{Theorem}[section]
\newtheorem{lemma}[theorem]{Lemma}

\newtheorem*{question}{Question}

\theoremstyle{definition}
\newtheorem{definition}[theorem]{Definition}

\theoremstyle{remark}
\newtheorem{example}[theorem]{Example}
\newtheorem{remark}[theorem]{Remark}
 
\begin{document}
\title[Differential operators
on nodal curves]{The ring of
differential operators on 
a nodal curve is not a 
bialgebroid}
\author{Myriam Mahaman}

\begin{abstract}
In a previous article, we showed that local projectivity
is a sufficient condition for the existence of a 
bialgebroid structure on the ring of differential operators
on an affine variety.
In this note, we show using elementary methods 
that the ring of differential operators on a nodal curve
is neither locally projective nor does it admit
a bialgebroid structure.
\end{abstract}

\subjclass{16S32, 16T15}

\maketitle

\section{Introduction}
Let \(A\) be a commutative algebra over a commutative ring  \(k\),
and  \(\DA\) be its ring of  \(k\)-linear differential operators
in the sense of Grothendieck
\cite{grothendieckElementsGeometrieAlgebrique1967}.
In the article \cite{krahmerRingDifferentialOperators2026}, 
we raised the following question:

\begin{question}
For which algebras \(A\) is  \(\DA\) a Hopf algebroid?
\end{question}

This question was motivated by Sweedler's results
in \cite{sweedlerGroupsSimpleAlgebras1974},
where he showed that
\(\DA\) is a Hopf algebroid
when \(A\) is the coordinate ring of a smooth
affine variety.
In our work \cite{krahmerRingDifferentialOperators2026}
we identified a more general property,
called \(R\)-\emph{local projectivity}
\cite{vercruysseLocalUnitsLocal2006},
which yields a positive
answer to our question.
Furthermore, we showed that 
this property is satisfied when \(A\)
is the coordinate ring of a monomial curve,
and thereby providing
the first examples of non-smooth
varieties whose rings of differential operators
are Hopf algebroids.

When the assumption about local projectivity is removed,
the answer to our question remains unclear.
In this short note, we provide the first 
explicit examples for which 
there is no bialgebroid structure:
we take \(A\) to be
the coordinate ring of an affine nodal 
curve, as studied in \cite{smithDifferentialOperatorsAffine1988},
and using elementary methods, we show not only that 
there is no bialgebroid structure on \(\DA\),
but also that \(\DA\) is not  
\(R(A)\)-locally projective. 

This note is structured as follows:
we introduce our examples in Section~\ref{sec:nodalcurves},
then we show that \(\DA\) is not  \(R(A)\)-locally projective
in Section~\ref{sec:localprojectivity},
and finally, we show that 
\(\DA\) does not admit a bialgebroid
structure in Section~\ref{sec:bialgebroid}.

\section*{Acknowledgements}
The author thanks Ulrich Krähmer for helpful comments 
on an earlier version of this manuscript. 
This work was supported by the GAČR/NCN grant 
\emph{Quantum Geometric Representation Theory and 
Non-commutative Fibrations} 24-11728K.

\section{Nodal curves}
\label{sec:nodalcurves}
We fix our notation for the remainder of this paper:
let \(k\) be a field of characteristic zero;
we denote \(\otimes \coloneqq \otimes_{k}\) and
\(\Hom \coloneqq \Hom_{k}\).
Let \(B \coloneqq k[t]\), 
\(\DB \coloneqq k\langle t, \partial \rangle 
/ (\partial t - t \partial - 1)\)
be its ring of \(k\)-linear differential operators,
\(I \coloneqq  f \, k[t]\) be an ideal of \(B\) such that
\(f = f_{1} \cdots f_{r}\) for some 
irreducible polynomials 
\(f_{1}, \ldots, f_{r} \in B\) (\(r \ge 2\))
which are pairwise coprime.
We consider the subalgebra \(A \coloneqq  k + I\) of \(B\)
and its ring of differential operators \(\DA\).
The following result was shown in 
\cite{smithDifferentialOperatorsAffine1988}:

\begin{lemma}
\(\DA = k + I \DB \subseteq \DB\).
\end{lemma}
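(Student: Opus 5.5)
The plan is to regard everything inside the localization. Since \(f \in A\) and \(A_f = B_f = k[t, f^{-1}]\), every differential operator on \(A\) or \(B\) extends uniquely to \(\D_{B_f} = B_f \otimes_B \DB = B_f[\partial]\). The extension exists because differential operators localize, and it is unique because \(A\) and \(B\) have the same fraction field. I will freely use the standard fact for the smooth algebra \(B\) that
\[
\DB = \{P \in B_f[\partial] : P(B) \subseteq B\},
\]
and the analogous description \(\DA = \{P \in B_f[\partial] : P(A) \subseteq A\}\), which holds because \(B_f[\partial] = \D_{A_f}\). To handle the roots of \(f\) I pass to \(\bar k\) where convenient. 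This is harmless because \(f\) is separable in characteristic zero, so the \(f_i\) have distinct simple roots \(p\) and \(fB = \bigcap_p (t-p)\bar k[t] \cap B\).

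\emph{Inclusion \(\supseteq\).} Clearly \(k \subseteq \DA\). For \(g \in I\) and \(P \in \DB\), the operator \(gP\) lies in \(B_f[\partial]\) and maps \(A \subseteq B\) into \(gB \subseteq fB = I \subseteq A\). Hence \(gP \in \DA\) by the description above.

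\emph{Inclusion \(\subseteq\).} Let \(D \in \DA\) of order \(n\). The first step is to show that \(D(B) \subseteq B\), so that \(D \in \DB\). The coefficients of \(D\) in \(B_f[\partial]\) have poles of bounded order only at the roots of \(f\). Hence there is an \(M\) such that \(D(h)\) is regular at every root \(p\) whenever \(h \in (t-p)^M B\).

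Fix \(b \in B\) and a root \(p\). By the Chinese remainder theorem, choose \(a \in B\) with \(a \equiv b \bmod (t-p)^M\) and \(a \equiv b(p) \bmod (t-q)^M\) for all other roots \(q\). Then \(a - b(p)\) vanishes at every root, so \(a \in k + fB = A\). Moreover \(D(b) = D(a) + D(b-a)\) is regular at \(p\), because \(D(a) \in A\) and \(b - a \in (t-p)^M B\). Doing this for all \(p\) gives \(D(b) \in B\).

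The second step splits off the constant. Write \(D(1) = c + fh\) with \(c \in k\), and set \(D' = D - c\), so that \(D'(1) \in I\). I claim \(D'(B) \subseteq fB\). With \(b, p, a\) as above, \(D'(a) \in A\) has the same value at all roots. Near another root \(q\), we have \(a - b(p) \in (t-q)^M\), so \(D'(a)(q) = b(p)\,D'(1)(q) = 0\). Hence \(D'(a)(p) = 0\). Also \(D'(b)(p) = D'(a)(p) + D'(b-a)(p)\), and \(D'(b-a)(p) = 0\) once \(M\) exceeds \(n\) plus the pole order (at this point \(D' \in \DB\) has polynomial coefficients, so \(M > n\) suffices). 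Thus \(D'(b)\) vanishes at every root, i.e. \(D'(b) \in fB\).

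Consequently \(f^{-1}D' \in B_f[\partial]\) maps \(B\) into \(B\), so \(f^{-1}D' \in \DB\). Therefore \(D = c + f \cdot f^{-1}D' \in k + I\DB\).

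The main obstacle is the jet-approximation step: bounding uniformly the order \(M\) needed to control both poles and vanishing, and arranging the CRT choice so that \(a\) actually lies in \(A\). The rest is bookkeeping with the description \(\DB = \{P \in B_f[\partial] : P(B) \subseteq B\}\).
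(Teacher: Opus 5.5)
Your proof is correct in substance, and it takes a different route from the paper. The paper does not prove the lemma itself; it refers to Proposition~4.4 of Smith's paper, where $\DA = k + I\DB$ comes out of a general study of differential operators on curves whose normalization map is not injective.

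You instead give a self-contained argument:
\begin{enumerate}
\item Since $A_f = B_f$, everything lives in $\D_{A_f} = B_f[\partial]$.
\item $\DA$ and $\DB$ are then the operators preserving $A$ and $B$ respectively.
\item A Chinese-remainder approximation of $b \in B$ by elements of $A$ (agreeing with $b$ to high order at one root, constant to high order at the others) gives first $D(B) \subseteq B$ and then $(D-c)(B) \subseteq fB$.
\end{enumerate}
This is elementary, in the spirit of the note, and it shows where each hypothesis is used:
\begin{enumerate}
\item characteristic zero gives $\DB = B[\partial]$ and separability;
\item squarefreeness of $f$ means that vanishing at all roots is divisibility by $f$;
\item $r \ge 2$ supplies a second root $q$, which is what forces $D'(a)(p) = 0$.
\end{enumerate}
The citation, by contrast, buys brevity and places the result in a general framework.

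One step needs to be stated more carefully. The approximation cannot be done inside $B = k[t]$ when $p \notin k$. Membership $a \in k + fB$ forces the common value $a(p) = b(p)$ at the roots to lie in $k$, and this fails for general $b$. For example, take $k = \mathbb{Q}$, $f = t(t^{2}-2)$, $p = \sqrt{2}$ and $b = t$.

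So rather than passing to $\bar k$ only for evaluations, base change the whole situation. Replace $A$, $B$, $D$ by $\bar k \otimes_k A = \bar k + f\bar k[t]$, $\bar k[t]$ and $\mathrm{id} \otimes D$. The last operator preserves $\bar k \otimes_k A$ and is given by the same element of $B_f[\partial]$. Your argument then runs verbatim. To descend, use $\bar k[t] \cap k(t) = k[t]$, $f\bar k[t] \cap k[t] = fk[t]$, and $c \in k$, which holds because $D(1) \in A$.
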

\begin{proof}
  See \cite[Proposition 4.4]{smithDifferentialOperatorsAffine1988}.
\end{proof}

We will use the following properties to show our main results:

\begin{lemma}
\label{lem:counterexamples}
\begin{enumerate}
\item For all \(D \in \DA\), we have  \(D(I) \subseteq I\).
\item There exists \(D \in \DA\) such that
 \(D(A) \subseteq I\) and  \(D(I) \not \subseteq I^{2}\).
\item There exists  \(D \in \DA\) such that 
  \(D(I^{2}) \not \subseteq I^{2}\).
\end{enumerate}
\end{lemma}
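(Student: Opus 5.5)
We work throughout with the description \(\DA = k + I\DB\) from the previous lemma, and treat the three parts in order.

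\emph{Part (1).} Let \(D = c + \sum_j g_j E_j\) with \(c \in k\), \(g_j \in I\) and \(E_j \in \DB\). For \(a \in I\) we have \(ca \in I\). We also have \(g_j E_j(a) \in I\), because \(E_j(a) \in B\) and \(I\) is an ideal of \(B\). Hence \(D(a) \in I\).

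\emph{Part (2).} The natural candidate is \(D = f\partial \in I\DB \subseteq \DA\).
\begin{itemize}
\item \(D\) kills constants, and for \(a = fb \in I\) we get \(D(a) = f(f'b + fb') \in I\). So \(D(A) \subseteq I\).
\item Take \(a = f\). Then \(D(f) = ff'\). Since \(I^2 = f^2B\), the claim \(ff' \notin I^2\) is equivalent to \(f \nmid f'\).
\item This holds for degree reasons: \(f\) is nonconstant and \(\deg f' < \deg f\), while \(f' \ne 0\) in characteristic zero.
\end{itemize}
Note that the fact \(D(a) \in fB\) for \(a \in I\) relies on \(fB\) being an ideal.

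\emph{Part (3).} Here we need an operator that lowers the \(f\)-adic order of some element of \(I^2\). The plan is to apply Lagrange interpolation to an operator of the form \(D = g\partial^m\) with \(g \in I\).
\begin{itemize}
\item Take \(D = f\partial^2\) and apply it to \(f^2 \in I^2\).
\item \(\partial^2(f^2) = 2f'^2 + 2ff''\), so \(D(f^2) = 2ff'^2 + 2f^2f''\).
\item This lies in \(f^2B\) iff \(f \mid 2f'^2\), i.e. iff \(f \mid f'^2\).
\item Since the \(f_i\) are pairwise coprime irreducibles, \(f\) is squarefree, so \(\gcd(f, f') = 1\) in characteristic zero.
\item Hence \(f \nmid f'^2\), and \(D(f^2) \notin I^2\).
\end{itemize}

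\emph{Main obstacle.} The only substantive point is the squarefreeness argument in Part (3): each \(f_i\) is separable because the characteristic is zero, and the \(f_i\) are pairwise coprime, so \(f\) and \(f'\) share no irreducible factor. Concretely, \(f_i \mid f'\) would force \(f_i \mid f_i' \prod_{j \ne i} f_j\), which is impossible. The hypothesis \(r \ge 2\) is not needed for these three facts.
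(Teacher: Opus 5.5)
Your proof is correct. Part (1) is exactly the paper's argument, but in parts (2) and (3) you take a different route.

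The paper never fixes a specific operator on \(B\). For a nonzero \(g \in I\) (resp.\ \(g \in I^{2}\)) it takes any \(D' \in \DB\) with \(D'(g) = 1\), and also \(D'(1) = 0\) in part (2); a suitable scalar multiple of \(\partial^{\deg g}\) works. Then \(D = fD'\) gives \(D(g) = f\), and \(f \notin f^{2}B\) holds simply because \(f\) is nonconstant.

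You instead commit to \(f\partial\) and \(f\partial^{2}\), which forces you to prove \(f \nmid f'\) and \(f \nmid f'^{2}\). The second of these genuinely uses that \(f\) is squarefree and that the characteristic is zero.

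The paper's choice is shorter and more robust: its argument for (3) only needs \(f\) to be nonconstant. By contrast, for \(f = t^{2}\) your operator \(t^{2}\partial^{2}\) sends \(t^{n}\) to \(n(n-1)t^{n}\) and therefore preserves \(I^{2} = t^{4}B\). Your choice, on the other hand, is completely explicit. It proves, for arbitrary squarefree \(f\), what the paper's Example leaves to ``straightforward computations'' for \((t^{2}-1)\frac{\dd}{\dd t}\) and \((t^{2}-1)\frac{\dd^{2}}{\dd t^{2}}\).

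Two minor points:
\begin{enumerate}
\item The sentence about ``Lagrange interpolation'' plays no role in your argument and should be removed.
\item Your closing claim that \(r \ge 2\) is unnecessary is true only of the deductions from \(\DA = k + I\DB\). That description, and with it part (1), fails for instance when \(r = 1\) and \(f = t\): then \(A = B\) and \(\partial(t) = 1 \notin I\).
\end{enumerate}
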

\begin{proof}
\begin{enumerate}
\item Let \(D \in \DA\).
  There exists  \(\lambda \in k\)
  and  \(D' \in \DB\)
  such that  \(D = \lambda + fD'\),
  hence \(D(a) = \lambda a + fD'(a) \in I\)
  for any  \(a \in I\).
\item Let \(g \in I\). 
  There exists \(D' \in \DB\)
  such that \(D'(1) = 0\) and \(D'(g) = 1\).
  Then \(D = fD' \in \DA\),
  and we have that 
  \(D(A) \subseteq I \) 
  and \(D(g) = f \not \in I^{2}\).
\item Let  \(g \in I^{2}\).
  There exists \(D' \in \DB\)
  such that  \(D'(g) = 1\).
  Then  \(D = fD' \in \DA\),
  and we have \(D(g) = f \not \in I^{2}\).
  \qedhere
\end{enumerate}
\end{proof}

\begin{example}
We assume \(A\) to be the coordinate ring of 
the nodal cubic \(y^{2} = x^{2}(x+1)\).
Under the identifications \(x \mapsto t^{2} - 1\)
and 
\(y \mapsto t(t^{2} - 1)\), we obtain
that \(A = k + I\)
where \(I = (t^{2} - 1)k[t]\).
Straightforward computations show that
the operator
\((t^{2}-1)\frac{\dd}{\dd t}\) 
satisfies the condition~(2) 
in Lemma~\ref{lem:counterexamples},
and that \((t^{2}-1) \frac{\dd^{2}}{\dd t^{2}}\)
satisfies condition~(3).
\end{example}

\section{Local projectivity}
\label{sec:localprojectivity}

The notion of \(R\)-local projectivity
was originally introduced 
in \cite{vercruysseLocalUnitsLocal2006}
for an arbitrary \(A\)-module  \(M\).
Let us formulate how it looks 
for the case \(M = \DA\):
we consider the map
\[
  \Psi \colon \DA \to \Hom(A \otimes \DA, \DA),
  \quad
  D \mapsto \Psi_{D},
\]
where \(\Psi_{D}(a \otimes D') = D(a) D'\).

\begin{definition}
  We say that \(\DA\) is \emph{ \(R(A)\)-locally projective}
if  for all \(D \in \DA\), we have \(D \in \Im(\Psi_{D})\),
  i.e. there exist \(a_{1}, \ldots, a_{n} \in A\) 
  and \(D_{1}, \ldots, D_{n} \in \DA\) such that
  \[
    D = \sum_{i = 1}^{n} D(a_{i}) D_{i}.
  \]
\end{definition}

\begin{lemma}
\label{lem:locproj}
Let \(D \in \DA\) such that \(D(A) \subseteq I\).
Then  \(E(I) \subseteq I^{2}\) for all \(E \in \Im(\Psi_{D})\).
\end{lemma}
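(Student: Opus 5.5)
Any \(E \in \Im(\Psi_{D})\) can be written as a finite sum \(E = \sum_{i} D(a_{i}) D_{i}\) with \(a_{i} \in A\) and \(D_{i} \in \DA\), where the product is composition in \(\DA \subseteq \DB\). Here \(D(a_{i})\) is regarded as the multiplication operator by that element of \(A\). The plan is to evaluate such an \(E\) on an arbitrary \(b \in I\) term by term and show that each summand lands in \(I^{2}\); since \(I^{2}\) is closed under addition, this suffices.

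For a single summand, \(\bigl(D(a_{i}) D_{i}\bigr)(b) = D(a_{i}) \cdot D_{i}(b)\). There are two factors to control.

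\begin{itemize}
\item By hypothesis \(D(A) \subseteq I\), so \(D(a_{i}) \in I\).
\item By Lemma~\ref{lem:counterexamples}(1), \(D_{i}(I) \subseteq I\), so \(D_{i}(b) \in I\).
\end{itemize}

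Hence the product lies in \(I \cdot I = I^{2}\), and summing over \(i\) gives \(E(b) \in I^{2}\).

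The only point requiring care is that \(\Psi_{D}(a \otimes D')\) really is the composite operator "multiply by \(D(a)\) after applying \(D'\)". This holds because \(A\) embeds in \(\DA\) as multiplication operators and the product in \(\DA\) is composition of operators on \(B\), hence also on \(A\). With that identification the argument is immediate, and no real obstacle remains.
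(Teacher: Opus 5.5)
Your proof is correct and is the same argument as the paper's: each summand $D(a_i)D_i$ sends $I$ into $I\cdot I = I^{2}$, because $D(a_i)\in I$ by hypothesis and $D_i(I)\subseteq I$ by Lemma~\ref{lem:counterexamples}(1). You are slightly more explicit than the paper in passing from simple tensors to finite sums, a step the paper leaves implicit in ``the result follows.''
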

\begin{proof}
Let \(a \in A\) and  \(D' \in \DA\).
We have  \(\Psi_{D}(a \otimes D') = D(a)D'\),
where \(D(a) \in I\) and  \(D'(I) \subseteq I\),
hence  \(D(a)D'(I) \subseteq I^{2}\) 
and the result follows.
\end{proof}

\begin{theorem}
\(\DA\) is not  \(R(A)\)-locally projective.
\end{theorem}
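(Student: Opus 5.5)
The plan is to exhibit a single operator that fails the defining condition of \(R(A)\)-local projectivity. Take \(D \in \DA\) as provided by Lemma~\ref{lem:counterexamples}(2), so that \(D(A) \subseteq I\) and \(D(I) \not\subseteq I^{2}\). We will show that \(D \notin \Im(\Psi_{D})\), which contradicts the definition.

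Suppose for contradiction that \(D \in \Im(\Psi_{D})\), i.e.\ \(D = \sum_{i} D(a_{i}) D_{i}\) for some \(a_{i} \in A\) and \(D_{i} \in \DA\). Since \(D(A) \subseteq I\), Lemma~\ref{lem:locproj} applies to every element of \(\Im(\Psi_{D})\). In particular, taking \(E = D\) itself, we get \(D(I) \subseteq I^{2}\). This contradicts the choice of \(D\) from Lemma~\ref{lem:counterexamples}(2), so \(D \notin \Im(\Psi_{D})\) and \(\DA\) is not \(R(A)\)-locally projective.

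One point needs checking. Lemma~\ref{lem:locproj} is stated for the elementary images \(\Psi_{D}(a \otimes D')\). The image \(\Im(\Psi_{D})\) consists of finite sums of such terms, and \(I^{2}\) is closed under addition, so the inclusion \(E(I) \subseteq I^{2}\) passes to sums. That lemma's proof also uses Lemma~\ref{lem:counterexamples}(1), namely \(D'(I) \subseteq I\) for all \(D' \in \DA\), which is already established.

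I expect no real obstacle: all the substantive input sits in Lemmas~\ref{lem:counterexamples} and \ref{lem:locproj}. The only step that deserves care is the existence of \(D\) in Lemma~\ref{lem:counterexamples}(2). There one needs some \(D' \in \DB\) with \(D'(1) = 0\) and \(D'(g) = 1\) for a fixed nonzero \(g \in I\). Such a \(D'\) exists, for instance \(\frac{1}{m!\,c}\partial^{m}\), where \(m = \deg g \ge 1\) and \(c\) is the leading coefficient of \(g\). For the nodal cubic the explicit witness \((t^{2}-1)\frac{\dd}{\dd t}\) can be used instead.
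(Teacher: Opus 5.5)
Your proposal is correct and follows the paper's proof exactly: you take \(D\) from Lemma~\ref{lem:counterexamples}~(2) and use Lemma~\ref{lem:locproj} to conclude that every element of \(\Im(\Psi_{D})\) maps \(I\) into \(I^{2}\), so \(D \notin \Im(\Psi_{D})\). Your extra checks are correct and only fill in details the paper leaves implicit: closure under finite sums, and the explicit witness \(\frac{1}{m!\,c}\partial^{m}\), where \(m \ge 1\) holds because \(\deg f \ge 2\).
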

\begin{proof}
We know from Lemma~\ref{lem:counterexamples}~(2)
that there exists
\(D \in \DA\) such that  \(D(A) \subseteq I\)
and  \(D(I) \not \subseteq I^{2}\).
Then it follows from Lemma~\ref{lem:locproj}
that \(D \not \in \Im(\Psi_{D})\),
hence \(\DA\) is not \(R(A)\)-locally projective.
\end{proof}

\section{Bialgebroid structure}
\label{sec:bialgebroid}
Let us show that there can be no bialgebroid
structure on \(\DA\).
Here \(\DA\) is endowed with the canonical counit
 \(\varepsilon \colon \DA \to A\), \(D \mapsto D(1)\),
and we show that there can be no associated
comultiplication. 
See \cite{krahmerRingDifferentialOperators2026}
for the full definition.

We consider the maps
\begin{align*}
  \mu_{*} \colon & \DA \otimes_{A} \DA \to \Hom(A \otimes A, A), &
  D_{1} \otimes_{A} D_{2} & \mapsto \big( a \otimes b 
  \mapsto D_{1}(a)D_{2}(b) \big), \\
  \mu^{*} \colon & \DA \to \Hom(A \otimes A, A), &
  D & \mapsto \big( a \otimes b 
  \mapsto D(ab) \big) 
,\end{align*}
where the tensor product \(\DA \otimes_{A} \DA\)
is obtained via
the left \(A\)-module structure of \(\DA\).

\begin{lemma}
\label{lem:bialgebroid}
There can be no map \(\Delta \colon \DA \to \DA \otimes_{A} \DA\)
 such that
\[
  \mu^{*}(D) = \mu_{*}(\Delta(D))
\]
for all \(D \in \DA\).
\end{lemma}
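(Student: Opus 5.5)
The plan is to argue by contradiction, using the operator from Lemma~\ref{lem:counterexamples}~(3) together with the invariance of \(I\) from Lemma~\ref{lem:counterexamples}~(1). Suppose such a \(\Delta\) exists. Let \(D \in \DA\) be as in (3), so that \(D(I^{2}) \not\subseteq I^{2}\), and choose \(g_1, g_2 \in I\) whose product \(g_1 g_2\) satisfies \(D(g_1 g_2) \notin I^{2}\). This is possible since \(I^{2}\) is spanned by such products: if \(D(g_1g_2) \in I^2\) for all pairs, linearity would give \(D(I^2)\subseteq I^2\). Write
\[
  \Delta(D) = \sum_{i=1}^{n} D_i \otimes_A D_i'
\]
with \(D_i, D_i' \in \DA\).

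The key step is to evaluate both sides of \(\mu^{*}(D) = \mu_{*}(\Delta(D))\) on \(g_1 \otimes g_2\). The left-hand side gives \(D(g_1 g_2)\). The right-hand side gives \(\sum_i D_i(g_1) D_i'(g_2)\). Before relying on this, I check that \(\mu_{*}\) is well defined on \(\otimes_A\) for the left module structure. We have \(aD_1 \otimes D_2 \mapsto aD_1(x)D_2(y)\) and \(D_1 \otimes aD_2 \mapsto D_1(x)\,aD_2(y)\), and these agree because \(A\) is commutative. So the formula holds, independently of the chosen representative of \(\Delta(D)\).

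By Lemma~\ref{lem:counterexamples}~(1), each \(D_i(g_1)\in I\) and each \(D_i'(g_2) \in I\). Hence the right-hand side lies in \(I\cdot I = I^{2}\), while the left-hand side \(D(g_1g_2)\) does not, which is a contradiction.

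The only slightly delicate point is the reduction from \(D(I^2)\not\subseteq I^2\) to a single product \(g_1g_2\). This is handled by the linearity argument above, since \(I^2\) is the \(k\)-span of products of two elements of \(I\); in the present case even \(I^2 = f^2 k[t]\) with \(f^2 = f\cdot f\) and \(I\ni fh\). Alternatively, one could take \(g = f^2 h\) directly from the proof of (3) and write it as \(f\cdot (fh)\). Either way, no real obstacle remains.
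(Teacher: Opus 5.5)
Your argument is correct and is essentially the paper's proof. The paper also observes that every element of \(\Im \mu_{*}\) sends \(I \otimes I\) into \(I^{2}\) by Lemma~\ref{lem:counterexamples}~(1), whereas \(\mu^{*}(D)\) does not for the operator \(D\) of Lemma~\ref{lem:counterexamples}~(3). You additionally make explicit two points the paper leaves implicit: the reduction to a single product \(g_1 g_2\), via \(I^{2}\) being spanned by such products, and the well-definedness of \(\mu_{*}\) on \(\DA \otimes_{A} \DA\).
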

\begin{proof}
On the one hand, we have \(\varphi(I \otimes I) \subseteq I^{2}\)
 for all \(\varphi \in \Im \mu_{*}\).
Indeed, for any \(D_{1}, D_{2} \in \DA\),
we have \(D_{i}(I) \subseteq I\) for \(i = 1,2\),
therefore
\[
  \mu_{*}(D_{1} \otimes_{A} D_{2})(a \otimes b) 
  = D_{1}(a)D_{2}(b) \in I^{2}
\]
for all \(a,b \in I\),
hence  \(\mu_{*}(D_{1} \otimes_{A} D_{2})(I \otimes I)
\subseteq I^{2}\).

On the other hand, we know from
Lemma~\ref{lem:counterexamples}~(3)
that there exists \(D \in \DA\)
such that  \(D(I^{2}) \not \subseteq I^{2}\).
It follows that
\(\mu^{*}(D)(I \otimes I) \not \subseteq I^{2}\),
hence \(\mu^{*}(D) \not \in \Im \mu_{*}\),
and there can be no map \(\Delta\)
satisfying the given property.
\end{proof}

\begin{theorem}
There is no bialgebroid structure
on  \(\DA\) for which the map \(D \mapsto D(1)\)
is the counit.
\end{theorem}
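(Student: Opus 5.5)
The plan is to argue by contradiction and reduce the statement to Lemma~\ref{lem:bialgebroid}. Suppose \(\DA\) carries a bialgebroid structure over \(A\) with counit \(\varepsilon(D) = D(1)\) and comultiplication \(\Delta \colon \DA \to \DA \otimes_{A} \DA\), as defined in \cite{krahmerRingDifferentialOperators2026}. The aim is to show that such a \(\Delta\) must satisfy \(\mu^{*}(D) = \mu_{*}(\Delta(D))\) for all \(D \in \DA\), contradicting Lemma~\ref{lem:bialgebroid}.

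First I would recall the standard fact that for a left bialgebroid with counit \(\varepsilon\), the action of \(\DA\) on the base algebra is given by \(D \triangleright a \coloneqq \varepsilon(Da)\), where \(Da\) is the product in \(\DA\) with \(a\) viewed as a multiplication operator. For the canonical counit this gives \(\varepsilon(Da) = (Da)(1) = D(a)\), so the induced action is just the natural action of differential operators on \(A\). The key bialgebroid axiom is that \(\Delta(D)\) lies in the Takeuchi product and that \(\Delta\) is multiplicative. Writing \(\Delta(D) = D_{(1)} \otimes_{A} D_{(2)}\), this yields the module-algebra (Leibniz-type) identity
\[
  D(ab) = \varepsilon(Dab) = D_{(1)}(a)\, D_{(2)}(b).
\]
To derive this, I would compute \(\Delta(Dab) = \Delta(D)\Delta(a)\Delta(b)\) with \(\Delta(a) = a \otimes_{A} 1\) and \(\Delta(b) = b\otimes_A 1\) (or the appropriate source/target placement), apply the counit identity \((\varepsilon \otimes_{A} \mathrm{id})\Delta = \mathrm{id}\), and then evaluate at \(1\), using the Takeuchi condition to move the scalars into the correct tensor factor. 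The right-hand side is precisely \(\mu_{*}(\Delta(D))(a \otimes b)\), and the left-hand side is \(\mu^{*}(D)(a\otimes b)\).

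The main obstacle is this step: getting the bookkeeping of source and target maps right. In \cite{krahmerRingDifferentialOperators2026} the source and target are presumably both the inclusion \(A \hookrightarrow \DA\), with the tensor product formed over the left module structure as stated above. I would follow the conventions there and check that the counit axiom \(\varepsilon(DE) = \varepsilon(D\,\varepsilon(E))\) together with multiplicativity of \(\Delta\) into the Takeuchi product gives exactly \(\mu^{*} = \mu_{*} \circ \Delta\). If the source reference already records this identity as part of its characterisation of bialgebroid structures on \(\DA\), I would simply cite it. Once the identity holds, Lemma~\ref{lem:bialgebroid} gives the contradiction, so no such structure exists.
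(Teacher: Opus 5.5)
Your proposal is correct and follows essentially the same route as the paper. The paper's proof simply asserts that the bialgebroid axioms force \(\mu^{*}(D) = \mu_{*}(\Delta(D))\) and then invokes Lemma~\ref{lem:bialgebroid}. Your sketch fills in the step the paper leaves implicit, deriving \(D(ab) = D_{(1)}(a)D_{(2)}(b)\) from multiplicativity, the Takeuchi condition (which with source equal to target equal to the inclusion lets you move \(b\) to the second factor) and counitality.
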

\begin{proof}
The axioms defining a bialgebroid require 
the comultiplication \(\Delta \colon \DA \to \DA \otimes_{A} \DA\)
to satisfy the property in Lemma~\ref{lem:bialgebroid};
however, as we have shown, there can be no such map.
\end{proof}

\begin{remark}
It remains unclear whether in general \(R\)-local projectivity
is necessary for the existence of a bialgebroid structure,
since the arguments we used to disprove 
the two properties are unrelated.
\end{remark}

\bibliographystyle{plainurl}
\bibliography{bibliography}
\end{document}